\documentclass[11pt]{amsart}
\usepackage{graphicx}

\usepackage{amsmath,amsfonts,amsthm,amssymb,graphics,color, hyperref, bbm}

\newtheorem{theorem}{Theorem}
\newtheorem{prop}{Proposition}

\newtheorem{ex}{Exercise}

\theoremstyle{remark}
\newtheorem{remark}{Remark}
\theoremstyle{definition}

\newcommand{\cV}{\mathcal{V}}
\newcommand{\one}{\mathbbm{1}}

\numberwithin{equation}{section}

\newcommand{\Prob}{\operatorname{Prob}}



\newcommand{\cC}{\mathcal{C}}


\newcommand{\fS}{\mathfrak{S}} 
\newcommand{\R}{\mathbb{R}}

\renewcommand{\epsilon}{\varepsilon}

        \definecolor{pink}{rgb}{1,0,1}

\begin{document}

\title[The level sets of typical games]{The level sets of typical games}  
\author[Julie Rowlett]{Julie Rowlett} \address{Leibniz Universit\"at Hannover \\ Institut f\"ur Analysis} 
\curraddr{Mathematical Sciences, Chalmers University and the University of Gothenburg, 41296, Sweden.\\ \url{http://www.math.chalmers.se/~rowlett/}} \email{julie.rowlett@chalmers.se}

\begin{abstract}
In a non-cooperative game, players do not communicate with each other.  Their only feedback is the payoff they receive resulting from the strategies they execute.  It is important to note that within each level set of the total payoff function the payoff to each player is unchanging, and therefore understanding the structure of these level sets plays a key role in understanding non-cooperative games.  This note, intended for both experts and non-experts, not only introduces non-cooperative game theory but also shows its fundamental connection to real algebraic geometry.  We prove here a general result about the structure of the level sets, which although likely to be known by experts, has interesting implications, including our recent application to provide a new mathematical explanation for the ``paradox of the plankton.''  We hope to encourage communication between these interrelated areas and stimulate further work in similar directions.  
\end{abstract}

\keywords{Non-cooperative game, equilibrium strategy, mixed game, continuous game, level sets, payoff function, semialgebraic set, algebraic geometry.  MSC Classification:  Primary 91A80; Secondary 14P10.}     

\maketitle   
\section{Introduction} 
What is a non-cooperative game?  You probably know the Rock-Paper-Scissors (RPS) game.  Let's play:  ready, set, go!  What did you choose?  I chose rock.  If you chose scissors, then I win, because rock crushes scissors.  If you chose paper, then you win, because paper covers rock.  If you also chose rock, then it's a draw.  This is an example of a two-player game in which each player has the same three pure strategies:  rock, paper, and scissors.  In order for this game to have any significance, we ought to define a \em payoff function.  \em  For example, we could say the winner receives \$1, which the loser must pay to the winner, so the loser's payoff is -\$1.  If it's a draw, then neither of us receives anything, so the payoffs are both 0.  

This is an example of what we'll call a \em discrete game.  \em  A more general notion is a continuous game, also called a mixed game, which we will simply call a \em game.  \em  Rather than just thinking about playing the game once, we think of the game being repeated an arbitrary or possibly infinite number of times.  Instead of deciding upon one of rock, paper, or scissors, we decide upon a probability distribution which is a list of three numbers corresponding to the probabilities of drawing rock, paper, or scissors.  The sum of these three numbers is 1, because we assume that we must draw something.  One example is $(1/3, 1/3, 1/3)$, which means the probabilities of drawing rock, paper, or scissors are equal to 1/3.  If you only want to draw rock, then your probability distribution would be $(1, 0, 0)$.  We can use these to compute our \em expected payoffs; \em these are known as \em expected values \em in probability theory.  The expected payoff is the sum of the probabilities of each possible outcome multiplied with the payoff according to that outcome.  For me, there is 1/3 chance I will win, which happens if I draw paper; there is a 1/3 chance I will lose, which happens if I draw scissors; and there is a 1/3 chance we will have a draw, which happens if I also draw rock.  Summing these probabilities multiplied with the corresponding payoffs, my expected payoff is 
$$\$ 1* \frac{1}{3} + -\$ 1 * \frac{1}{3} + 0 * \frac{1}{3} = 0.$$
 In your case, there is also a 1/3 chance you will win, which happens if I draw scissors; and there is a 1/3 chance you will lose, which happens if I draw paper; and there is a 1/3 chance we will have a draw, which happens if I draw rock.  So, your expected payoff is also  
 $$\$ 1 * \frac{1}{3} + - \$1 * \frac{1}{3} + 0 * \frac{1}{3} = 0.$$
 More generally, if my probability distribution is $(a, b, c),$ and yours is $(x,y,z)$, corresponding to probabilities of executing rock, paper, or scissors, respectively, then we compute my expected payoff is 
$$-ay + az +bx -bz + cy - cx.$$

\begin{ex} 
What is your expected payoff?  
\end{ex} 

For those of you with some background in game theory, you know that RPS is an example of a two-player, symmetric, zero-sum game which can be given in normal form.  We will see how to express RPS in normal form in \S 2.  The general field of game theory is enormous and has connections to many areas of mathematics, including geometry and analysis.  It would be quite bold to claim to present an exhaustive survey of games and game theory.  Instead, I would like to present an introduction to games which are especially appealing to geometric analysts.  A \em non-cooperative (continuous) game, \em which we will simply call a game, is canonically identified with a \em payoff function, \em from $\R^N$ to $\R^n$, where $n$ is the number of players, and $N$ is the total number of pure strategies summed over all players.  This function will be assumed to be consistent with the definition of expected payoff and expected value.  

\begin{ex} 
What are $n$ and $N$ in the RPS game?
\end{ex} 

For mathematicians in the fields of analysis and geometry, these games are appealing because we can prove theorems about them using the tools of geometry and analysis.  A perfect example is the Nobel-prize winning Nash Equilibrium Theorem.  To state this, we need a few definitions.  

In an $n$-player game, each player has some number of pure strategies, like rock, paper, and scissors.  For more general games it is possible that, unlike in RPS, different players have different sets of pure strategies.  We will use $m_i$ to denote the number of pure strategies the $i^{th}$ player has.  A \em strategy \em for the $i^{th}$ player is a list of $m_i$ non-negative numbers which sum to 1.  These correspond to the probabilities of executing each pure strategy.  Note that these are sometimes called mixed strategies, but since they include the pure strategies, we will simply call them strategies. We can identify each pure strategy with a unit vector in $\R^{m_i}$, because each pure strategy means doing that strategy with probability 1, and the others with probability 0.  So, for instance in RPS, we could identify $(1,0,0) \in \R^3$ with rock, $(0,1,0) \in \R^3$ with paper, and $(0,0,1) \in \R^3$ with scissors.  

The set of strategies for the $i^{th}$ player is the set of all $m_i$-tuples $(c_1, c_2, \ldots, c_{m_i})$ such that 
\begin{equation} \label{mixed} 0 \leq c_j \leq 1, \quad j=1, 2, \ldots, m_i, \quad \sum_{j=1} ^{m_i} c_j = 1.  \end{equation} 
This is nice for geometers because we can geometrically represent the set of strategies for the $i^{th}$ player as the convex hull of the standard unit vectors $\{e_1, e_2, \ldots, e_{m_i}\}$ in $\R^{m_i}$.  The pure strategies are the vertices of this convex set.  We will represent this set by $\fS_i$ and the total strategy space for all players, 
$$\fS = \prod_{i=1} ^n \fS_i.$$
The total strategy space is the product of each of the strategy spaces, so we can view this as a subset of $\R^{N}$, where 
$$N := \sum_{i=1} ^n m_i.$$

The game is represented by $n$ payoff functions which give the expected payoff to each player determined by the strategies across all players  
$$\wp_i : \fS \to \R, \quad i=1, \ldots, n.$$ 

\begin{ex}
Prove that, in order for the payoff function to correspond to the expected value given by the probability distributions over pure strategies, each player's payoff function must be linear in the strategy of that player.  
\end{ex}  

This means that if all other players' strategies are fixed, then each $\wp_i : \fS_i \to \R$ is a linear function.  

The (total) payoff function is 
$$\wp : \fS \to \R^n, \quad \wp = (\wp_1, \wp_2, \ldots, \wp_n).$$
Each component function $\wp_i$ of the total payoff function depends on the strategies of \em all \em players.  Although each function $\wp_i$ is a linear function on $\fS_i$ alone, it need \em not \em in general be simultaneously linear in the strategies of the other players.  For example, the $i^{th}$ player's payoff could depend on the $j^{th}$ player's strategy in a \em non-linear \em way.  We will see more about this in \S 3.  

Now, let us introduce the last bit of notation necessary to state Nash's celebrated theorem.  For $s \in \fS$ and $\sigma \in \fS_i$ let $(s; i; \sigma)$ be the strategy in which the $i^{th}$ player's strategy is replaced by $\sigma$, and all other players' strategies are given by $s$.  An \em equilibrium strategy, \em which is also called an \em equilibrium point, \em is $s \in \fS$ such that 
$$\wp_i (s) \geq \wp_i (s; i; \sigma) \quad \forall \sigma \in \fS_i, \quad \forall i = 1, 2, \ldots, n.$$
This means that no player can increase his payoff by changing his strategy if the strategies of the other players remain fixed.  

\begin{theorem}[Nash]  \label{nash} 
There exists at least one equilibrium strategy in $\fS$.  
\end{theorem}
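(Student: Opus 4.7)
The plan is to realize Nash equilibria as fixed points of a continuous self-map of $\fS$ and then invoke Brouwer's fixed point theorem. Two structural facts make this strategy available: the strategy space $\fS = \prod_{i=1}^n \fS_i$ is a product of simplices, hence a nonempty, compact, convex subset of $\R^N$; and each payoff $\wp_i$ is multilinear in the blocks of variables, hence continuous on $\fS$. These are precisely the ingredients needed to run the classical Nash-map construction.

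First, for each $s \in \fS$, each player $i$, and each pure strategy $e_j \in \fS_i$, I would define the \emph{gain}
$$g_{i,j}(s) \;=\; \max\bigl\{\,0,\; \wp_i(s;i;e_j) - \wp_i(s)\,\bigr\},$$
which measures how much player $i$ would improve by unilaterally deviating to $e_j$. Continuity of $\wp_i$ forces continuity of $g_{i,j}$. I would then define $F:\fS \to \fS$ block by block by
$$F_i(s)_j \;=\; \frac{s_{i,j} + g_{i,j}(s)}{1 + \sum_{k=1}^{m_i} g_{i,k}(s)}, \qquad j = 1,\ldots,m_i.$$
Summing the numerators over $j$ recovers the denominator, so $F_i(s) \in \fS_i$; hence $F$ is a continuous self-map of a compact convex set, and Brouwer supplies a fixed point $s^* \in \fS$.

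The heart of the argument is the identification of any such $s^*$ with an equilibrium. Writing $G_i := \sum_k g_{i,k}(s^*)$, the fixed-point equation rearranges to $s^*_{i,j}\, G_i = g_{i,j}(s^*)$ for every $i$ and $j$. Suppose for contradiction that $s^*$ is not an equilibrium, so some player $i$ strictly gains by deviating to some $\sigma \in \fS_i$; by Exercise~3, $\wp_i$ is linear in the $i$-th block, so this deviation may be taken to be a pure strategy, witnessing $G_i > 0$. On the other hand, the same linearity yields $\wp_i(s^*) = \sum_j s^*_{i,j}\, \wp_i(s^*;i;e_j)$, expressing $\wp_i(s^*)$ as a convex combination, so at least one $j$ with $s^*_{i,j} > 0$ must satisfy $\wp_i(s^*;i;e_j) \leq \wp_i(s^*)$ and therefore $g_{i,j}(s^*) = 0$. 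For this index, the relation $s^*_{i,j}\, G_i = 0$ contradicts $G_i > 0$.

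The main obstacle is this final bookkeeping step: one must select a pure strategy that lies simultaneously in the support of player $i$'s mixed strategy \emph{and} fails to be a best response at $s^*$. The linearity of $\wp_i$ on $\fS_i$ — a consequence of the probabilistic interpretation of the payoff function — together with the simplex geometry of $\fS_i$ is exactly what guarantees such a pure strategy exists, and it is the conceptual linchpin of the whole argument.
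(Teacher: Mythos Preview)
Your argument is correct: this is the classical Nash-map construction, and the bookkeeping in the contradiction step is carried out cleanly.

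It is worth noting, however, that the paper attributes the proof to the \emph{Kakutani} fixed point theorem rather than to Brouwer. In that version one considers the best-response correspondence $B:\fS \rightrightarrows \fS$, where $B_i(s) = \arg\max_{\sigma \in \fS_i} \wp_i(s;i;\sigma)$; linearity of $\wp_i$ in the $i$-th block makes each $B_i(s)$ a nonempty convex set (a face of $\fS_i$), continuity of $\wp$ makes $B$ upper hemicontinuous, and Kakutani then delivers a fixed point, which is an equilibrium essentially by definition. Your Brouwer approach trades the heavier set-valued fixed-point machinery for the explicit gain-adjusted map $F$ and the support argument you highlight at the end; it is more elementary in its topological input but requires the extra verification that fixed points of $F$ coincide with equilibria. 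Both proofs are due to Nash himself---the Kakutani version in his 1950 note and the Brouwer version you give in the 1951 \emph{Annals} paper \cite{nash}---so either is a faithful account of ``Nash's proof.''
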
 

The proof is a clever application of the Kakutani Fixed Point Theorem.  Nash defined a function which has a fixed point precisely at an equilibrium point.  He then used the continuity of the total payoff function and the  Fixed Point Theorem to prove that this cleverly defined function must have at least one fixed point.   

In the spirit of Nash's theorem, one can apply geometric analysis to prove a characterization of the level sets of the total payoff function for most games.  We begin in \S 2 with an example from popular culture and a preliminary result based on linear algebra.  We will see in \S 3 that continuity of the payoff function and further properties follow from its definition and use these properties to prove the main theorem.  This result is already recognized by game theorists; see for example \cite{book}.  Nonetheless the proof is instructive for readers learning the theory of non-cooperative games and combines elements of analysis, geometry, geometric measure theory, and algebraic geometry, yet deep knowledge of these areas is not required.  Consequently, we hope the reader also finds the result and its proof interesting.  Although this theorem does not appear to be new, we have made a novel application in biology to the ``paradox of the plankton,'' which is described in \S 4.    
\section*{Acknowledgements} I wish to express sincere gratitude to Susanne Menden-Deuer, Sylvie Paycha, Ulrich Menne, Leobardo Rosales, Jarod Alper, David Trotman, and Wolfgang Ebeling for helpful discussions and suggestions.  The comments of the anonymous referees are also sincerely appreciated and contributed positively to the quality of this note.  As a geometric analyst claiming to be neither an expert in game theory nor in real algebraic geometry, I would like to humbly request that readers consider this an acknowledgement to any references which they feel should have been included and assure that any failure to do so was not intentional!  Finally, I am grateful for the support of the Max Planck Institut f\"ur Mathematik in Bonn, the Georg-August Universit\"at G\"ottingen, the Leibniz Universit\"at Hannover, and the Australian National University.
\section{Preliminaries} 
 In the film, ``A Beautiful Mind,'' based on the life and work of John Nash \cite{nash}, there is a scene which purportedly depicts Theorem \ref{nash}.   
 
 \subsection{A beautiful mind} 
 In this scene Nash is together with a group of male colleagues at a bar as a group of women enters.  One woman is depicted as being thought of as the most attractive to the men, whereas the other women are depicted as being considered only of average attractiveness to the men.  In a flash of insight, Nash's character apparently realizes that he can apply the mathematics he has been studying to determine the best course of action for the men:  he imagines each of the men approaching a different, averagely attractive woman and leaving with her, whereas the most attractive woman is left alone.  At this point Nash hurriedly leaves the bar to work on his new insight.  

The situation is depicted as a competition between the men, where each man decides without communicating with the others which woman he will court.  This corresponds to a non-cooperative game.  For simplicity, let's assume there are 2 men, denoted by man 1 and man 2, and 3 women, denoted by ``M'' (for most attractive) and ``A'' (for averagely attractive).  Each man has two pure strategies:  M which corresponds to courting the most attractive woman, and A which corresponds to courting one of the averagely attractive women.  The \em normal form \em of the game is the following.  

\bigskip
\begin{center} 
\begin{tabular}{|l|c|c|} 
 \hline & M & A \\ \hline M & (0, 0) & (1, -1) \\ \hline A & (-1, 1) & (0, 0) \\ \hline 
\end{tabular}
\end{center} 
 \bigskip 

This is also known as a \em payoff matrix, \em since it lists the payoffs to each player according to the corresponding strategies.  This is an example of a two-player, symmetric, zero-sum game, with one dominant (winning) strategy.  The winning strategy is successfully courting the most attractive woman.  The interpretation of the payoffs is that if both men do strategy M, then they are both unsuccessful.  This means that neither man has won, so they each receive a neutral payoff, 0.  Similarly, if both men do strategy A, and presumably each court a different woman, then they are both successful, but since neither man has won, they each receive a neutral payoff.  In the last case one man does strategy M while the other does strategy A, and so the man doing M has won and receives a payoff of 1, whereas the other man can be seen as the loser and receives a payoff of -1.  

\begin{ex} Represent the rock-paper-scissors game in normal form. 
\end{ex} 

If the probability that man 1 does M is $x$, and the probability that man 2 does M is $y$, then the payoff functions are 
$$\wp_1 (x,y) = x-y, \quad \wp_2 (x,y) = y-x.$$
\begin{ex} Show that the unique equilibrium strategy is $x=y=1$. 
\end{ex} 

The interpretation of the equilibrium strategy is that both men should with probability 1 attempt to court the most attractive woman.  This contradicts the film which indicates that the equilibrium strategy ought to be $x=y=0$.   

One possible explanation is that if indeed the above model was used to determine the best strategy for the characters in the film, perhaps the filmmakers understood that the payoff according to the equilibrium strategy is $(0,0)$.  There is precisely one strategy contained in the level set $\wp^{-1} (0,0)$ $=\{ 0 \leq x = y \leq 1\}$ for which each man pairs up with a woman with probability 1, and that is the strategy $(x=y=0)$ given in the film. While not the equilibrium strategy, it is just as good in the sense that the payoff is identical to the payoff according to the equilibrium strategy, so perhaps this is the reason it is considered to be the best.   

\begin{figure} \includegraphics[width=150pt]{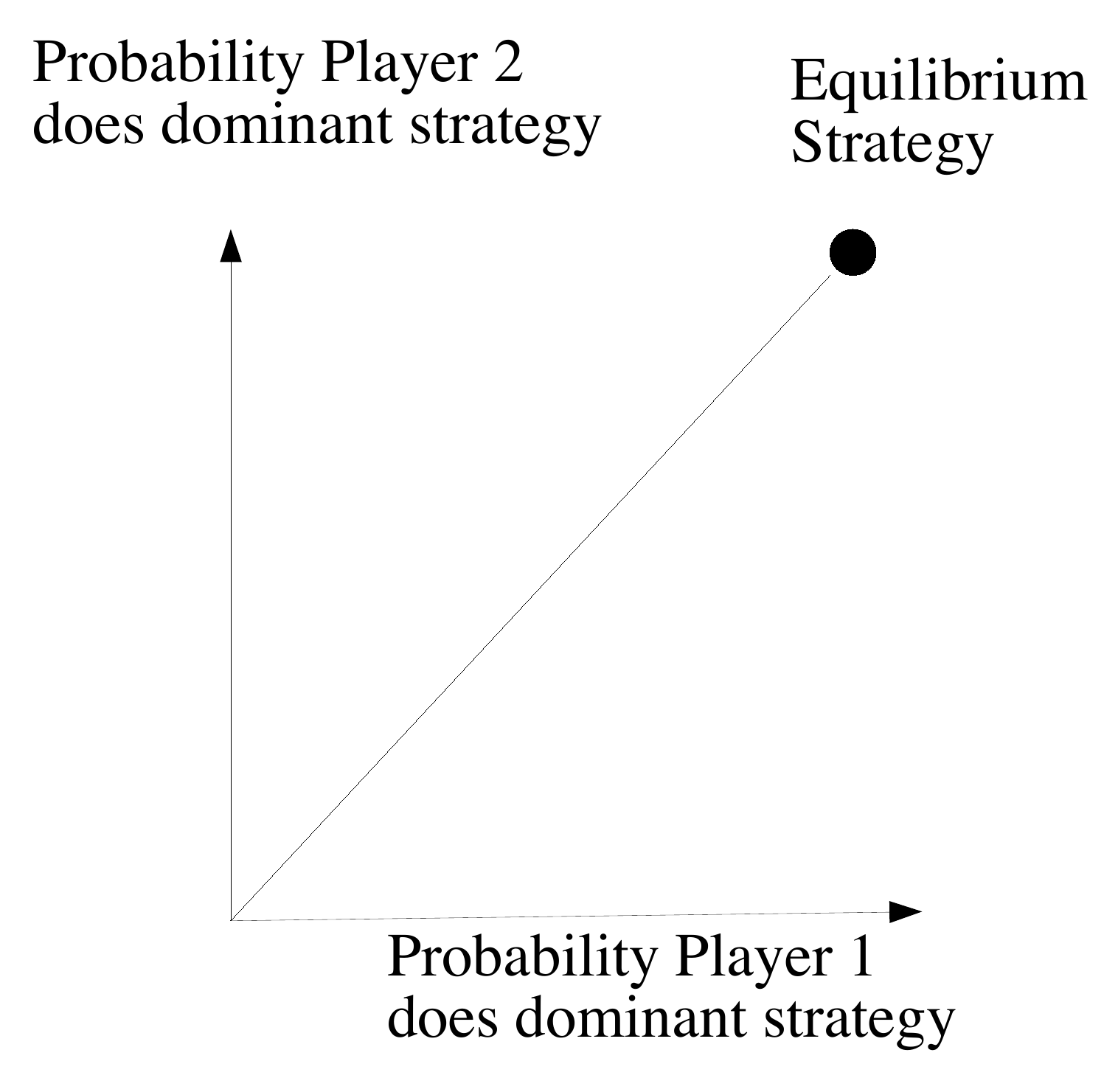}  \caption{For a two-player, symmetric, zero-sum game with one dominant strategy, the level sets of the total payoff function are line segments. This depicts the general idea of Theorem \ref{lots}; the level sets of most games almost always have positive $j$-dimensional Hausdorff measure for some $j \geq 1$.}  \label{fig-peqs}
\end{figure} 
   
It may however seem more natural to define the game with a different payoff matrix.  

\begin{ex} Is it possible to define a game in normal form such that the equilibrium strategy is consistent with the film? 
\end{ex} 
For the solution to this exercise, see \cite{love}.     

\subsection{Linear payoff functions}
While it is always true that the $i^{th}$ player's payoff function depends linearly on his own strategy, it need not be the case that his payoff function also depends linearly on the \em other \em players' strategies.  If, however, this is the case, then we can prove the following result.  This is a good warm-up for the more general characterization of the level sets of payoff functions for typical games.   

\begin{prop} \label{lin}  For an $n$-player game, assume that each player has at least two pure strategies and at least one of the following holds.  
\begin{enumerate}
\item At least one player has three or more pure strategies. 
\item The game is zero-sum.
\end{enumerate}
Let $N$ denote the total number of pure strategies over all players as above.  If only (1) holds, let $k=N-2n$.  If (2) holds, let $k = N-2n +1$.  If the payoff functions are all linear functions in the strategies of all players, then the level sets of the total payoff function are affine linear subsets of dimension $j \geq k$, where $j$ is given in the proof below.  
\end{prop}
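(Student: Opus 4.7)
My plan is to prove Proposition \ref{lin} by a direct rank--nullity count applied to $\wp$ restricted to the affine hull of $\fS$.

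First I would identify the affine dimension of the strategy space. The simplex $\fS_i$ lies inside the affine hyperplane $\{c_1 + \cdots + c_{m_i} = 1\}$ of $\R^{m_i}$, so it has affine dimension $m_i - 1$. Consequently the affine hull $H$ of $\fS = \prod_{i=1}^n \fS_i$ inside $\R^N$ has dimension $\sum_{i=1}^n (m_i - 1) = N - n$.

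By hypothesis each $\wp_i$ is linear in all players' strategies jointly, so $\wp = (\wp_1, \ldots, \wp_n)$ extends to a linear map $\R^N \to \R^n$. Restricting to $H$ gives an affine-linear map $\wp|_H$ of some rank $r$, and by rank--nullity every non-empty fibre is the intersection of $\fS$ with an affine subspace of $H$ of dimension $j := (N-n) - r$. The entire argument then reduces to bounding $r$ from above. Under condition (1) alone, the trivial bound $r \leq n$ (coming from the codomain $\R^n$) yields $j \geq (N-n) - n = N - 2n = k$. Under condition (2), the zero-sum identity $\sum_i \wp_i \equiv 0$ on $\fS$ extends by linearity to a genuine linear relation among the $\wp_i$ on all of $H$, so the image of $\wp|_H$ is contained in a hyperplane and $r \leq n - 1$; this gives $j \geq (N-n) - (n-1) = N - 2n + 1 = k$.

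I do not anticipate a serious obstacle; the proof is essentially bookkeeping. The only two points I would handle with some care are (i) checking that the standing hypotheses force $k \geq 1$ so that the bound is non-trivial --- under (1), some $m_i \geq 3$ together with $m_j \geq 2$ for the remaining players gives $N \geq 2n + 1$, while under (2), $m_i \geq 2$ for all $i$ gives $N \geq 2n$ --- and (ii) noting that although a level set is literally the convex polytope $L \cap \fS$ for an affine subspace $L$ of dimension $j$, its affine hull has dimension exactly $j$ whenever the fibre meets the relative interior of $\fS$, which is the sense in which ``the level sets are affine linear subsets of dimension $j$.''
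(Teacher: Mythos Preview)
Your proof is correct and follows essentially the same rank--nullity argument as the paper: identify the affine hull of $\fS$ with an $(N-n)$-dimensional space, bound the rank of $\wp|_H$ by $n$ (or $n-1$ in the zero-sum case), and read off $j=(N-n)-r\geq k$. Your write-up is in fact more careful than the paper's, which leaves the dimension count as an exercise and does not explicitly verify $k\geq 1$ or comment on the distinction between the affine fiber $L$ and the polytope $L\cap\fS$.
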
 

Recall that a game is \em zero-sum \em precisely when, for each $s \in \fS$, 
\begin{equation} \label{zero-sum} \sum_{i=1} ^n \wp_i (s) = 0. \end{equation} 
Zero-sum games imply that gains by some players are met by equal losses to other players and can therefore be used to model competition for limited resources.  

\begin{proof}
Do you remember the Rank-Nullity Theorem from linear algebra?  This theorem states that for an $m \times n$ matrix, the sum of the dimension of the column space (rank) together with the dimension of the kernel (nullity) is equal to $n$.  The idea is that if one uses Gauss-Jordan elimination to put the matrix in row-reduced echelon form, then each column is either a pivot column or not.  The number of pivot columns is the rank, and the number of non-pivot columns is the nullity.  This number must sum up to the total number of columns, and that is $n$.  So, what does this mean if we have a matrix which is longer than it is tall, so that $n > m$?  Since the column space is the dimension of the space spanned by the columns, and each column is an element of $\R^m$, this dimension is at most $m$.  Since $n>m$, the nullity must be at least $n-m$.  It turns out that the payoff function $\wp$ for most games can be canonically identified with a map from a higher dimensional Euclidean space to a lower dimensional Euclidean space.  

\begin{ex}  Show that the strategy space $\fS$ is an $N-n$ dimensional subset of $\R^N$.  Using \eqref{zero-sum} if the game is zero-sum, show that the payoff function is canonically identified with a map from $\R^{N-n}$ to $\R^{N-n - k}$.  
\end{ex} 

Since the payoff function can be represented by an affine linear function from $\R^{N-n}$ to $\R^{N-n-k}$, there exists an $(N-n-k) \times (N-n)$ matrix $M$ and a $(N-n-k) \times 1$ vector $b$ such that $\wp(s) = Ms + b$.  The level sets of $\wp$ are translations of the kernel of $M$, and since $M$ is $k$-columns wider than it is tall, by the Rank-Nullity Theorem the dimension of the kernel of $M$ is $j \geq k$.  
\end{proof} 
 
This proposition may be helpful in familiarizing readers with games and payoff functions, since it relies only on the definitions and linear algebra.  What is more interesting is that this result can be generalized to payoff functions which are \em not \em linear in the strategies of \em all \em players.  The assumptions (1) and (2) above are satisfied by a ``typical game,'' because if a player has only one pure strategy, then he cannot affect the outcome of play, so his role is trivial.  Moreover, many games have at least three pure strategies per player and/or are zero-sum.

\section{Main result} 
While John Nash was a graduate student at Princeton in 1950, he proved the existence of equilibrium strategies for non-cooperative games \cite{nash}.  In 1952, he published \em Real algebraic manifolds \em and proved that two real algebraic manifolds are equivalent if and only if they are analytically homeomorphic \cite{nash2}.  He then proceeded in 1954--1956 to study the imbedding problem for Riemannian manifolds \cite{nash3}, \cite{nash4}.  That work involved what are now known as Nash functions and Nash manifolds; the payoff functions considered here are examples of Nash functions.  Based on his work in game theory, differential geometry, and algebraic geometry, we can be pretty sure that Nash was the first to recognize this result and therefore acknowledge it to him.\footnote{The author would like to note that this result and its proof, although implicitly or explicitly known by experts, was obtained independently.}  

\begin{theorem}[Nash] \label{lots} For an $n$-player game, assume that each player has at least two pure strategies, and at least one of the following holds.  
\begin{enumerate}
\item At least one player has three or more pure strategies. 
\item The game is zero-sum.
\end{enumerate}
The image $\wp(\fS)$ is then a $k$-dimensional semialgebraic set for some $k \leq n$ in case only (1) holds or $k \leq n-1$ in case (2) holds.  For almost every $y \in \wp(\fS)$ with respect to $k$-dimensional Hausdorff measure, the level set $\wp^{-1} (y)$ has positive (or infinite) $N-n-k$-dimensional Hausdorff measure, noting that $N-n-k \geq 1$.  
\end{theorem}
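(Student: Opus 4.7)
The plan is to combine the semialgebraic nature of the payoff map with a generic fiber-dimension argument from real algebraic geometry. First I would observe that each component $\wp_i$ is a polynomial in the strategy variables: the earlier exercises show that $\wp_i$ is linear in the $i$th player's block of variables, and interpreting $\wp_i$ as an expectation over independent randomizations of the players makes it multilinear across the blocks. Hence $\wp : \R^N \to \R^n$ is a polynomial map, and its restriction to $\fS$ is a polynomial map on a semialgebraic set of dimension $N-n$ (by the exercise in \S 2).

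Second, by the Tarski--Seidenberg theorem the image $\wp(\fS) \subset \R^n$ is semialgebraic, so it has a well-defined dimension $k$, and trivially $k \leq n$. In the zero-sum case $\wp(\fS)$ lies in the hyperplane $\{y \in \R^n : \sum y_i = 0\}$, forcing $k \leq n-1$. Either of the hypotheses (1) or (2) then delivers $N-n-k \geq 1$: under (1) some $m_i \geq 3$ while all others are at least $2$, so $N \geq 2n+1$, giving $N-n-k \geq (n+1)-n = 1$; under (2), $N-n \geq n$ and $k \leq n-1$ combine to the same bound.

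Third, I would invoke the generic fiber-dimension theorem for semialgebraic maps: for a continuous semialgebraic map $f : X \to Y$ between semialgebraic sets with image of dimension $k$, there is a semialgebraic subset $Z \subset f(X)$ with $\dim Z < k$ such that every $y \in f(X) \setminus Z$ has fiber $f^{-1}(y)$ of dimension exactly $\dim X - k$. Applied to $\wp$ with $\dim \fS = N-n$, this gives fibers of dimension $N-n-k$ away from a set $Z$ of strictly smaller dimension and therefore of $k$-dimensional Hausdorff measure zero. Finally, any semialgebraic set of dimension $d$ in Euclidean space contains, in its Whitney stratification, a $d$-dimensional smooth stratum that is locally bi-Lipschitz to $\R^d$ and so carries positive (possibly infinite) $d$-dimensional Hausdorff measure; this converts the fiber-dimension statement into the desired measure-theoretic conclusion.

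The step I expect to be the main obstacle is the generic fiber-dimension input, which is the one place the argument genuinely leaves linear algebra for the machinery of semialgebraic geometry. A self-contained route would be a cylindrical algebraic decomposition adapted to the graph of $\wp$: such a decomposition partitions $\fS$ and $\wp(\fS)$ into finitely many semialgebraic cells on which $\wp$ becomes a semialgebraic trivial fibration, and inspecting the top-dimensional cells of the image yields both the generic fiber dimension and the positive-Hausdorff-measure lower bound directly. Presenting this cell-decomposition input at a level suitable for the intended broad audience, while still using it to extract the precise measure-theoretic statement in the theorem, is the delicate part.
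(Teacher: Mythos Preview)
Your proposal is correct and follows essentially the same route as the paper: polynomiality of $\wp$, Tarski--Seidenberg for the image, and then a semialgebraic generic-fiber-dimension argument converted to a Hausdorff-measure statement. The only cosmetic difference is that the paper unpacks your ``generic fiber-dimension theorem'' by combining Hardt's semialgebraic triviality (Theorem~9.3.2 of Bochnak--Coste--Roy) with the semialgebraic Sard theorem, whereas you cite the packaged statement directly and offer CAD as a backup; your explicit verification that $N-n-k\geq 1$ under either hypothesis is a small addition the paper leaves implicit.
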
 

\begin{proof}
The main idea is to put a game which satisfies these hypotheses in normal form.  The strategy space 
$$\fS \cong \prod_{i=1} ^n \fS_i, \quad \fS_i \cong \left\{ \sum_{j=1} ^{m_i} c_j e_j \in \R^{m_i} : 0 \leq c_j \leq 1, \, \sum_{j=1} ^{m_i} c_j = 1\right\},$$
where $e_j$ are used to denote the standard unit vectors in Euclidean space.  The normal form of a game lists the payoffs to all players corresponding to all possible combinations of pure strategies.  These combinations of pure strategies geometrically correspond to the vertices of $\fS$.  We denote this set by $\cV$ and use binary expansions of integers to represent the elements of $\cV$.  There are 
$$M:= \prod_{i=1} ^n m_i$$ 
elements of $\cV$.  Each element is of the form 
$$v_x = \sum_{j=1} ^N x_j e_j, \, x_j \in \{0,1\} \, \forall j, \quad x := \sum_{j=1} ^N x_j 2^j \in \{2, 4, \ldots, 2^{N+1} -2\}.$$
So we see that each vertex $v_x$ corresponds to a unique $x$, because binary expansions are unique (see Chapter 6 of \cite{moi}).  There is one further restriction on the vertices:  each player executes \em one \em pure strategy at a vertex $v_x$.  Mathematically we can express this using the orthogonal projections 
$$\phi_i : \fS \to \fS_i,$$
together with 
$$\one_i := \sum_{j=1} ^{m_i} e_j,$$
so that 
$$\phi_i (v_x) \cdot \one_i = 1, \quad \forall i=1, \ldots, n.$$
The normal form for such a game would in this generality be a rather large matrix.  Each player requires one dimension, so the matrix is $n$-dimensional.  Along the $i^{th}$ dimension there are $m_i$ slots, corresponding to each of the $m_i$ possible pure strategies for the $i^{th}$ player.  In an entry of this matrix, we list the payoffs to each player for the corresponding list of pure strategies.  Once we know all these payoffs, then just like the RPS game, we can write the payoff for any mixed strategy, because this must be consistent with the expected value.  So, for a strategy 
$$s = \sum_{j=1} ^N c_j e_j \in \fS, \quad \wp_i (s) = \sum_{v_x \in \cV} \Prob_{v_x} (s) \wp_i (v_x).$$
Above, we used $\Prob_{v_x} (s)$ to denote the probability according to $s$ of the combination of pure strategies in $v_x$.  This is the product of the probabilities according to $s$ of each pure strategy in $v_x$, 
$$\Prob_{v_x} (s) = \prod_{j=1} ^N x_j c_j.$$
So, what does this mean?  The payoff functions 
$$\wp_i (s) = \sum_{v_x \in \cV} \left( \prod_{j=1} ^N x_j c_j \right) \wp_i (v_x).$$
The important observation is that this is a polynomial function in the variables $\{c_j\}_{j=1} ^N$.  The total payoff function is therefore a real polynomial function from $\fS \subset \R^N \to \R^n$.  Since $\fS$ is defined by a finite set of inequalities and linear equations, it is by definition a semialgebraic set (see Definition 2.1.4 on p. 24 of \cite{rag}).  By the Tarski-Seidenberg Theorem (see p. 28--29 of \cite{rag}), $\wp(\fS)$ is also semialgebraic set.  Such a set has the structure of a stratified space, which is a disjoint union of a finite number of smooth manifolds (strata) which are themselves semialgebraic sets, and such that this stratification can be taken to satisfy the Whitney conditions \cite{rev}.  In this case, since the payoff function is continuous, and $\fS$ is compact, the image $\wp(\fS)$ is compact, and so this semialgebraic set is triangulable and is semialgebraically isomorphic to a finite polyhedron \cite{rev}.  It has some dimension $k \leq n$.  Note that if the game is zero-sum, then 
$$\wp_n (s) = 1 - \sum_{i=1} ^{n-1} \wp_i (s),$$
which implies that $\wp(\fS)$ has dimension $k \leq n-1$.  The level sets, $\wp^{-1} (\wp(s))$ for $\wp(s) \in \wp(\fS)$ are known in this setting as fibers.  By Theorem 9.3.2 and Corollary 9.3.3 on p. 221--224 of \cite{rag}, we can decompose $\wp(\fS)$ as the union of semialgebraic sets 
$$\wp(\fS) = \bigcup_{l=0} ^L T_l, \quad \dim(T_0) = k, \quad \dim(T_l) < k, \quad \forall l\geq1,$$
such that each $T_l$ is closed for $l \geq 1$, and $\wp$ has a semialgebraic trivialization over each $T_l$.  This means that for each $l$, the fibers $f^{-1} (y)$ have dimension $d_l$ for all $y \in T_l$.  By removing the lower dimensional strata, $T_0$ is an open $k$-dimensional semialgebraic set.  By Proposition 2.38 on p. 71 of \cite{algor}, $\wp^{-1} (T_0)$ is an open semialgebraic set (openness follows since $\wp$ is a polynomial and therefore continuous).  By the Semialgebraic Sard Theorem (see Theorem 9.6.2 on p. 235 of \cite{rag}) the set of critical values in $T_0$ has dimension strictly smaller than $k$.  At a regular (not critical) point, the derivative matrix $D \wp(s)$ has rank equal to $k$, and the level set $\wp^{-1} (\wp(s))$ is an $N-n-k$ dimensional submanifold of $\fS$ (recall that $\fS$ is $N-n$ dimensional).  Since the dimension of the fibers over $T_0$ are all the same, this means that the level sets $\wp^{-1} (y)$ have dimension $N-n-k$ for all $y \in T_0$.  
Furthermore the sets $\wp(\fS)$ and $T_0$ differ by a set of zero $k$-dimensional Hausdorff measure.  Consequently, for almost all (with respect to $k$-dimensional Hausdorff measure) $y \in \wp(\fS)$, the level set $\wp^{-1} (y)$ is an $N-n-k$ dimensional submanifold and therefore has positive (or infinite) $N-n-k$ dimensional Hausdorff measure. 
\end{proof}

\begin{remark}
One could likely say more about the structure of the level sets (fibers) using the tools of real algebraic geometry; references include \cite{rag}, \cite{rag2}, \cite{algor}, \cite{nashmf}, \cite{gory}.  For our biological application discussed in \S 4, the above Theorem was sufficient. 
\end{remark}

\subsection{Bibliographical Note} 
Being rather new to game theory it was natural to search the literature for results concerning the level sets of payoff functions.  In \cite{throats}, the level sets of the value (payoff) function for linear differential games of fixed terminal time with a convex payoff function were numerically investigated.  For a linear pursuit-evasion game with two pursuers and one evaders, the level sets of the value function were numerically studied in \cite{evade}.   For zero-sum games, \cite{flows} studied a certain Hamiltonian flow which can be used to study the best response dynamic in two-person games, and showed that under certain assumptions the level sets of the associated Hamiltonian function are topological spheres.  Further examples of the study of the level sets of the payoff functions for specific games include \cite{kurt}, \cite{pats}, \cite{pats2}, and \cite{sauk}.  Investigating connections between real algebraic geometry and game theory led to Neyman's work including \cite{book}.  It appears that many results in game theory tend to be more computational whereas the results in real algebraic geometry tend to be more theoretical.  We hope to encourage further communication between game theorists and real algebraic geometers.  
  
 \section{Applications}  
The structure of the level sets of the payoff functions has a novel application to biology by providing a new and rigorous solution to the long-standing ``paradox of the plankton'' in \cite{mdr}.  

\subsection{Biodiversity of Micro-Organisms} 
The ``paradox of the plankton'' coined by Hutchinson in 1961 \cite{hutch} is the observation that the number of co-existing plankton species appears to contradict the explicable number based on competition theory \cite{har60}, \cite{gause}.  The number of co-existing species is orders of magnitude larger than expected, based on competition theories and predictions which yield reasonably accurate numbers for macro-organisms.  There have been numerous explanations proposed by biologists, but a mathematical theory consistent with all these explanations which is based on a biological factor subject to natural selection and is not in contradiction to competition theory appears to have been missing.  In \cite{mdr}, we realized that Theorem \ref{lots} has implications for a game modeling competition of plankton organisms which may resolve the paradox.  

How can we use a game to model competition of plankton organisms?  Plankton reproduce asexually and are genetically identical within a species.  Justified by this clonal nature we define a ``player'' as consisting of many individuals belonging to one species.  The survival of the species is a cumulative function of the survival of its individuals.  Due to the asexual reproduction, success in competition among microbes can be identified with population increase or decrease, which corresponds to positive or negative payoff.  The strategies for each player (=species) are probability distributions across the various behaviors of which that species is capable.  Each of these probabilities is the probability that a randomly selected individual organism does the corresponding behavior (like swim up, for example).  In this way, we can use a game to model competition between plankton species.  

What is the connection to the structure of the level sets of typical games?  In a broad sense evolution can be described as a feedback loop; we refer readers interested in evolutionary game dynamics to \cite{ev} and the references therein.  This means that within a level set of a game modeling competition, the feedback to all species is identical.  There is absolutely no difference.  In the generic sense made precise in Theorem \ref{lots}, the level sets of typical games are typically \em large. \em  How do the hypotheses of the theorem fit with plankton ecology?  The hypotheses mean that all species are capable of at least two \em different behaviors.  \em  This corresponds to \em individual variability \em which seems to be the underlying mechanism supporting the large plankton biodiversity and may explain the unexpectedly large biodiversity of other microbes as well.  The assumptions (1) and (2) mean that either species possess further variability and/or are competing for limited resources which also appears to be the case.  

Although plankton individuals are genetic clones within a species, they exhibit significant variability among individuals; see \S 1 of \cite{mdr} and the references cited therein.  This individual variability is inherent to a species and is subject to natural selection and consequently to the evolutionary feedback loop \cite{falk04}.  We propose that it is this individual variability which is driving the large biodiversity.  Mathematically, by Theorem 2, this variability implies that the level sets of the payoff function are large.  These large level sets correspond to the large variety of strategies which are all ``equally good,'' in the sense that they produce identical feedback.  The various strategies in the level sets may characterize different species and correspond to the large number of species which may co-exist, which we describe as ``many different ways to stay in the game" \cite{mdr}.  

Our theory may be thought of as ``survival of the cumulatively fit'' rather than ``survival of the fittest" and is applicable to micro-organisms which re-produce asexually.  Defining a player as consisting of several organisms belonging to one species, while reasonable for micro-organisms which reproduce asexually, no longer makes sense for larger macro-organisms which do not reproduce asexually, because the death of an individual implies the loss of that individual's unique genome.  Consequently, our theory does not contradict competition theories or predictions of species abundance for macro-organisms.  

\subsubsection{Compatibility} It may seem counter-intuitive to apply non-cooperative game theory to evaluate a relationship, but many everyday decisions are made quickly according to self-interest, without cooperative discussion.   In \cite{love} we used non-cooperative game theory to design a new type of compatibility test to measure the balance and overall happiness of two people in a relationship.  Our test may be customized to analyze the overall balance and satisfaction in any relationship between two people, romantic or otherwise; this is discussed in \cite{love}.  If you are in a relationship, we challenge you to take this test!  

\subsection{Concluding Remarks} 
In situations modeled by non-cooperative games players do not communicate; the only feedback they experience is their payoff.  This means that not only equilibrium strategies but also the structure of the level sets of the payoff function are important to understand.  Some readers may be of the opinion that only seasoned experts ought to write about a certain topic.  However, approaching a field from a different perspective may at times be helpful, and so I hope that this note written from a geometric analyst's perspective has provided some basic insight to non-cooperative (mixed/continuous) games, and that it may inspire further investigation and collaboration.

{}

\end{document}